\begin{document}
\newtheorem{definition}{Definition}[section]
\newtheorem{example}[definition]{Example}
\newtheorem{remark}[definition]{Remark}
\newtheorem{observation}[definition]{Observation}
\newtheorem{theorem}[definition]{Theorem}
\newtheorem{lemma}[definition]{Lemma}
\newtheorem{proposition}[definition]{Proposition}
\newtheorem{corollary}[definition]{Corollary}
%\newtheorem{keyword}{Keywords:}
%\newtheorem{AMS}{2010 AMS Classification:}
%\numberwithin{equation}{section}

%
%
%\afmititle
%
%\chead[\scriptsize \.Ismail Osmano\u{g}lu et al.  /Ann.  Fuzzy Math.  Inform. {\bf x} (201y), No. x, xxx--xxx]
%     {\scriptsize \.Ismail Osmano\u{g}lu et al.  /Ann.  Fuzzy Math.  Inform. {\bf x} (201y), No. x,  xxx--xxx}
%
\title{Compact fuzzy soft spaces}{}
\author{\.Ismail Osmano\u{g}lu, Deniz Tokat}{}

\maketitle
%\vspace{-3mm}
%%%%%% To be entered at AFMI: =====>> %%%%%%%%%%%%%%%%%%%%%%%%%%%%%%%%%%%%%%%%%%%%%%
%\received{xx.xx.2013}; %%%%%%%%%%%%%%%%%%%%%%%%%%%%%%%
%\revised{xx.xx.2013}; %%%%%%%%%%%%%%%%%%%%%%%%%%%%%%%
%\accepted{xx.xx.2013}
%%%%%%%%%%%%%%%%%%%%%%%%%%%%%%%%%%%%%%%%%%%%%%%%%%%%%%%%%%%%%%%%%%%%%%%%%%%%%%%%%%%%%

\vspace{-3mm}
\noindent\hrulefill \\%%%%%%%%%%%%%%%%%%%%%%%%%%%%%%%%%%%%%%%%%%%%%%%%%%%%%%%%%%%%%%%%%%%%%%%%%%%%%
\vspace{-3mm}

\begin{abstract}
In this article, by using basic properties of fuzzy soft topology we defined fuzzy soft compactness. We also
introduced some basic definitions and theorems of the concept.

2010 AMS Classification: 54A40, 03E72, 54D30.

Keywords: Fuzzy soft topology, Fuzzy soft cover, Fuzzy soft compactness.
%\text{Corresponding author: Deniz Tokat (dtokat@nevsehir.edu.tr) }\\
\\
\end{abstract}
\vspace{-3mm}

\noindent\hrulefill \\%%%%%%%%%%%%%%%%%%%%%%%%%%%%%%%%%%%%%%%%%%%%%%%%%%%%%%%%%%%%%%%%%%%%%%%%%%
%%%%%%%%%%%%%%%%%%%%%%%%%%%%%%%%%%%%%%%%%%%%%%%%%%%%%%%%%%%%%%%%%%%%%%%%%%%
%%%%%%%%%%%%%%%%%%%%%%%%%%%%%%%%%%%%%%%%%%%%%%%%%%%%%%%%%%%%%%%%%%%%%%%%%%%

\section{Introduction}

% \textcolor{red}{\huge \bf T}
 
 There are many problems encountered in real life. Various mathematical set theories such as soft set which was introduced by Molodtsov \cite{7} and fuzzy set which developed by Zadeh \cite{10} have been developed to solve these problems. P. K. Maji, R. Biswas, A. R. Roy \cite{11} also initiated the more generalized concept of fuzzy soft sets which is a combination of fuzzy set and soft set. Then many researchers have applied this concept. B. Tanay et. al. introduced topological structure of fuzzy soft set in \cite{5} and gave a introductory theoretical base to carry further study on this concept. Following this study, some others (\cite{6},\cite{9},\cite{12},\cite{13}) studied on the concept of fuzzy soft topological spaces. We will introduce compactness on fuzzy soft topological spaces and give some important definitions and theorems.

\section{Preliminaries}

\begin{definition} \cite{10}
A fuzzy set $A$ of a non-empty set $X$ is characterized by a membership
function $\mu _{A}:X\rightarrow \left[ 0,1\right] $ whose value $\mu _{A}\left( x\right) $ represents the "grade of membership" of $x$ in $%
A $ for $x\in X$.

Let $I^{X}$ denotes the family of all fuzzy sets on $X$. If $A,B\in I^{X}$,
then some basic set operations for fuzzy sets are given by Zadeh as follows:

\begin{description}
\item[$\left( 1\right) $] $A\leq B\Leftrightarrow \mu _{A}\left( x
\right) \leq \mu _{B}\left( x \right) $, for all $x\in X$.

\item[$\left( 2\right) $] $A=B\Leftrightarrow \mu _{A}\left( x \right)
=\mu _{B}\left( x \right) $, for all $x\in X$.

\item[$\left( 3\right) $] $C=A\vee B\Leftrightarrow \mu _{C}\left( x
\right) =\mu _{A}\left( x \right) \vee \mu _{B}\left( x \right) $,
for all $x\in X$.

\item[$\left( 4\right) $] $D=A\wedge B\Leftrightarrow \mu _{D}\left( x
\right) =\mu _{A}\left( x \right) \wedge \mu _{B}\left( x \right) $%
, for all $x\in X$.

\item[$\left( 5\right) $] $E=A^{c}\Leftrightarrow \mu _{E}\left( x
\right) =1-\mu _{A}\left( x \right) $, for all $x\in X$.
\end{description}
\end{definition}

\begin{definition}\cite{7}
Let $X$ be the initial universe set and $E$ be the set of
parameters. A pair $(F,A)$ is called a soft set over $X$ where $F$ is a
mapping given by $F:A\rightarrow P\left( X\right) $ and $A\subseteq E$.

In the other words, the soft set is a parametrized family of subsets of the
set $X$. Every set $F(e)$, for every $e\in A$, from this family may be
considered as the set of $e$-elements of the soft set $(F,A)$.
\end{definition}

\begin{definition}\cite{11}
Let $A\subseteq E$. A pair $(f,A)$ is called a fuzzy soft set
over $X$ , where $f:A\rightarrow I^{X}$ is a function.

That is, for each $a\in A$, $f\left( a\right) =f_{a}:X\rightarrow I$ is a
fuzzy set on $X$.
\end{definition}

\begin{definition}\cite{6}
Fuzzy soft set $(f,A)$ on the universe $X$ is a mapping from the parameter
set $E$ to $I^{X}$, i.e., $(f,A):E$ $\rightarrow $ $I^{X}$, where $%
(f,A)\left( e\right) \neq 0_{X}$ if $e\in A\subseteq E$ and $(f,A)\left(
e\right) =0_{X}$ if $e\notin A$, where $0_{X}$ is empty fuzzy set on $X$.

From now on, we will use $FS(X,E)$ instead of the family of all fuzzy soft
sets over $X$.
\end{definition}

\begin{definition}\cite{6}
Let $\left( f,A\right) ,\left( g,B\right) \in FS(X,E)$. The following
operations are defined as follows:

\begin{description}
\item[Subset] $\left( f,A\right) \tilde{\subseteq}\left( g,B\right) $ if $%
\left( f,A\right) \left( e\right) \leq \left( g,B\right) \left( e\right) ,$
for each $e\in E.$

\item[Equal] $\left( f,A\right) =\left( g,B\right) $ if $\left(
f,A\right) \tilde{\subseteq}\left( g,B\right) $ and $\left( g,B\right) 
\tilde{\subseteq}\left( f,A\right) .$

\item[Union] $\left( h,A\cup B\right) =\left( f,A\right) \tilde{\cup}
\left( g,B\right) $ where $\left( h,A\cup B\right) \left( e\right) =\left(
f,A\right) \left( e\right) \vee \left( g,B\right) \left( e\right) ,$ for all 
$e\in E.$

\item[Intersection] $\left( h,A\cap B\right) =\left( f,A\right) 
\tilde{\cap}\left( g,B\right) $where $\left( h,A\cap B\right) \left(
e\right) =\left( f,A\right) \left( e\right) \wedge \left( g,B\right) \left(
e\right) ,$ for all $e\in E.$
\end{description}
\end{definition}

\begin{definition}\cite{6}
Let $\left( f,A\right) \in FS(X,E)$. Then complement of $\left(
f,A\right) $, denoted by $\left( f,A\right) ^{c}$, is the fuzzy soft set
defined by $\left( f,A\right) ^{c}\left( e\right) =1_{X}-\left( f,A\right)
\left( e\right) $, for all $e\in E.$

Clearly $\left( \left( f,A\right) ^{c}\right) ^{c}=\left( f,A\right) .$
\end{definition}

\begin{definition}\cite{6}
Let $\left( f,E\right) \in FS(X,E)$.The fuzzy soft set $\left( f,E\right) $
is called the null fuzzy soft set, denoted by $\tilde{0}_{E}$, if $\left(
f,E\right) \left( e\right) =0_{X},$for all $e\in E.$
\end{definition}

\begin{definition}\cite{6}
Let $\left( f,E\right) \in FS(X,E)$.The fuzzy soft set $\left( f,E\right) $
is called theuniversal fuzzy soft set, denoted by $\tilde{1}_{E}$, if $%
\left( f,E\right) \left( e\right) =1_{X},$for all $e\in E.$

Clearly $\left( \tilde{1}_{E}\right) ^{c}=\tilde{0}_{E}$ and $\left( \tilde{0%
}_{E}\right) ^{c}=\tilde{1}_{E}.$
\end{definition}

\begin{definition}\cite{2}
Let $FS(X,E)$ and $FS(Y,K)$ be the families of all fuzzy soft
sets over $X$ and $Y$ , respectively. Let $\varphi :X\rightarrow Y$ and $%
\psi :E\rightarrow K$ be two functions. Then the pair $\left( \varphi ,\psi
\right) $ is called a fuzzy soft mapping from $X$ to $Y$, and denoted by $%
\left( \varphi ,\psi \right) :FS(X,E)\rightarrow FS(Y,K)$.

If $\varphi $ and $\psi $ is injective then the fuzzy soft mapping $\left(
\varphi ,\psi \right) $ is said to be injective. If $\varphi $ and $\psi $\
is surjective then the fuzzy soft mapping $\left( \varphi ,\psi \right) $ is
said to be surjective.

The fuzzy soft mapping $\left( \varphi ,\psi \right) $ is called constant,
if $\varphi $ and $\psi $\ are constant.
\end{definition}

\begin{definition}\cite{5}
A fuzzy soft topological space is a pair $\left( X,\tau \right) $
where $X$ is a nonempty set and $\tau $ a family of fuzzy soft sets over $X$
satisfying the following properties:

$(1)$ $\tilde{0}_{E},\tilde{1}_{E}\in \tau ,$

$(2)$ If $\left( f,A\right) ,\left( g,B\right) \in \tau ,then$ $\left(
f,A\right) \tilde{\cap}\left( g,B\right) \in \tau $

$(3)$ If $\left( f_{i},A\right) \in \tau ,i\in J,$ then $\cup _{i\in
I}\left( f_{i},A\right) \in \tau $

$\tau $ is called a topology of fuzzy soft sets on $X$. Every member of $%
\tau $ is called fuzzy soft open.

$\left( g,B\right) $ is called fuzzy soft closed in $\left( X,\tau \right) $
if $\left( g,B\right) ^{c}\in $ $\tau $.
\end{definition}

\begin{definition}\cite{5}
Let $(X,\tau _{1})$ and $(Y,\tau _{2})$ be two fuzzy soft topological
spaces. If each $\left( f,A\right) \in \tau _{1}$ is in $\tau _{2}$, then $%
\tau _{2}$ is called fuzzy soft finer than $\tau _{1}$, or (equivalently) $%
\tau _{1}$ is fuzzy soft coarser than $\tau _{2}$.
\end{definition}

\begin{definition}\cite{6}
Let $(X,\tau _{1})$ and $(Y,\tau _{2})$ be two fuzzy soft topological spaces.

$(1)$ A fuzzy soft mapping $\left( \varphi ,\psi \right) :(X,\tau
_{1})\rightarrow (Y,\tau _{2})$ is called fuzzy soft continuous if $\left(
\varphi ,\psi \right) ^{-1}\left( \left( g,B\right) \right) \in \tau _{1}$, $%
\forall \left( g,B\right) \in \tau _{2}.$

$(2)$ A fuzzy soft mapping $\left( \varphi ,\psi \right) :(X,\tau
_{1})\rightarrow (Y,\tau _{2})$ is called fuzzy soft open if $\left( \varphi
,\psi \right) \left( \left( f,A\right) \right) \in \tau _{2}$, $\forall
\left( f,A\right) \in \tau _{1}.$
\end{definition}

\section{Compact fuzzy soft spaces}

\begin{definition}
A family $\Psi $\ of fuzzy soft sets is a cover of a fuzzy soft set $\left(
f,A\right) $ if

$\qquad \left( f,A\right) \subseteq \cup \left\{ \left( f_{i},A\right)
:\left( f_{i},A\right) \in \Psi ,i\in I\right\} $.

It is a fuzzy soft open cover if each member of $\Psi $\ is a fuzzy soft
open set. A subcover of $\Psi $\ is a subfamily of $\Psi $\ which is also a
cover.
\end{definition}

\begin{definition}
Let $(X,\tau )$ be fuzzy soft topological space and $\left( f,A\right) \in
FS(X,E)$. Fuzzy soft set $\left( f,A\right) $ is called compact\ if\ each
fuzzy soft open cover of $\left( f,A\right) $ has a finite subcover. Also
fuzzy soft topological space $(X,\tau )$ is called compact if each fuzzy
soft open cover of $\tilde{1}_{E}$ has a finite subcover.
\end{definition}

\begin{example}
A fuzzy soft topological space $(X,\tau )$ is compact if $X$ is finite.
\end{example}

\begin{example}
Let $(X,\tau )$ and $(Y,\sigma )$ be two fuzzy soft topological spaces and $\tau
\subset \sigma $. Then, fuzzy soft topological space $(X,\tau )$ is compact
if $(Y,\sigma )$ is compact.
\end{example}

\begin{proposition}\label{p1}
Let $\left( g,B\right) $ be a fuzzy soft closed set in fuzzy\ soft\ compact
space $(X,\tau )$. Then $\left( g,B\right) $ is also compact.
\end{proposition}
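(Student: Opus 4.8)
The plan is to imitate the classical argument that a closed subspace of a compact space is compact, adapting it to the fuzzy soft framework. Let $\Psi=\{(f_i,A_i):i\in I\}$ be an arbitrary fuzzy soft open cover of $(g,B)$, so that $(g,B)\tilde{\subseteq}\cup_{i\in I}(f_i,A_i)$. Because $(g,B)$ is fuzzy soft closed, its complement $(g,B)^c$ belongs to $\tau$ and is therefore available as an extra fuzzy soft open set to adjoin to the cover. I would form the enlarged family $\Psi'=\Psi\cup\{(g,B)^c\}$, and the heart of the proof is to show that $\Psi'$ is a fuzzy soft open cover of the universal fuzzy soft set $\tilde{1}_E$.

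Granting that $\Psi'$ covers $\tilde{1}_E$, the remainder is routine. By compactness of $(X,\tau)$ there is a finite subfamily of $\Psi'$ whose union is $\tilde{1}_E$; we may assume it has the form $\{(f_{i_1},A_{i_1}),\dots,(f_{i_n},A_{i_n}),(g,B)^c\}$, keeping the complement (discarding it only helps). I then claim that deleting $(g,B)^c$ already yields a finite cover of $(g,B)$. Indeed, at each parameter $e$ and point $x$ the finite subcover forces $\max\bigl(\max_k\mu_{f_{i_k}(e)}(x),\,1-\mu_{g(e)}(x)\bigr)=1$; wherever $\mu_{g(e)}(x)>0$ the complement contributes the value $1-\mu_{g(e)}(x)<1$, so the maximum is achieved by the $f_{i_k}$, giving $\max_k\mu_{f_{i_k}(e)}(x)=1\geq\mu_{g(e)}(x)$, while wherever $\mu_{g(e)}(x)=0$ the inequality $\mu_{g(e)}(x)\leq\max_k\mu_{f_{i_k}(e)}(x)$ is trivial. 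Hence $(g,B)\tilde{\subseteq}\cup_{k=1}^{n}(f_{i_k},A_{i_k})$, and $\{(f_{i_k},A_{i_k})\}_{k=1}^{n}$ is the desired finite subcover of $\Psi$.

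The hard part will be the very first step, namely verifying that $\Psi'$ covers $\tilde{1}_E$, and here the fuzzy setting is genuinely delicate. The naive manipulation $\tilde{1}_E=(g,B)\tilde{\cup}(g,B)^c\tilde{\subseteq}\bigl(\cup_i(f_i,A_i)\bigr)\tilde{\cup}(g,B)^c$ is \emph{not} valid, because the law of excluded middle fails for fuzzy complements: $(g,B)\tilde{\cup}(g,B)^c$ carries membership $\max(\mu,1-\mu)$, which need not equal $1$. I would therefore analyze the pointwise requirement $\max\bigl(\sup_i\mu_{f_i(e)}(x),\,1-\mu_{g(e)}(x)\bigr)=1$ directly, splitting into the case $\mu_{g(e)}(x)=0$, where $(g,B)^c$ already supplies the value $1$, and the case $\mu_{g(e)}(x)>0$, where one must extract from the covering hypothesis that the open sets $(f_i,A_i)$ actually attain the value $1$ on the support of $g$. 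Pinning down exactly what the covering condition delivers on that support is the crux on which the whole argument rests, and it is the step I expect to require the most care.
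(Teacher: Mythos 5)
Your proposal is not a complete proof, and the step you yourself flag as the crux --- showing that $\Psi'=\Psi\cup\{(g,B)^c\}$ covers $\tilde{1}_E$ --- is not merely delicate: it cannot be closed, because the proposition is false under the paper's definitions. The covering hypothesis gives only $\sup_i\mu_{f_i(e)}(x)\geq\mu_{g(e)}(x)$, and at a point where $0<\mu_{g(e)}(x)<1$ nothing forces $\max\bigl(\sup_i\mu_{f_i(e)}(x),\,1-\mu_{g(e)}(x)\bigr)=1$. Concretely, take $X=\{x\}$ and $E=\{e\}$, so that a fuzzy soft set is just a number $t\in[0,1]$, and let $\tau=\{0\}\cup\{\frac{1}{2}-\frac{1}{n}:n\geq 2\}\cup\{\frac{1}{2}\}\cup\{1\}$. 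This family is a chain containing $0$ and $1$, closed under finite infima and arbitrary suprema, hence a fuzzy soft topology. The space is compact: any open cover of $\tilde{1}_E$ must contain the member $1$ itself, since every other member of $\tau$ is at most $\frac{1}{2}$. The set $g=\frac{1}{2}$ is closed, since $1-\frac{1}{2}=\frac{1}{2}\in\tau$. Yet the open cover $\{\frac{1}{2}-\frac{1}{n}:n\geq 2\}$ of $g$ (its supremum is exactly $\frac{1}{2}\geq g$) has no finite subcover, because every finite subfamily has maximum strictly below $\frac{1}{2}$. So $g$ is a closed, non-compact set in a compact space, and no argument can prove the proposition as stated; in particular, in this example your family $\Psi'$ has supremum $\max(\frac{1}{2},\frac{1}{2})=\frac{1}{2}\neq 1$, confirming that the first step of your plan genuinely fails.

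It is worth saying that your diagnosis is sharper than the paper's own proof, which commits exactly the two errors you warn against: it asserts $\tilde{1}_X\tilde{\subseteq}\left(\cup_i(f_i,A)\right)\tilde{\cup}(g,B)^c$ with no justification (this is the law of excluded middle, false for fuzzy complements, and false in the example above), and it then discards $(g,B)^c$ from the finite subcover ``since $(g,B)\tilde{\cap}(g,B)^c=\Phi$,'' which is likewise false whenever $g$ takes values strictly between $0$ and $1$. Your pointwise argument for the deletion step (where $\mu_g>0$ the complement contributes $1-\mu_g<1$, so some $f_{i_k}$ must attain the value $1$) is correct and genuinely repairs that half of the argument; the irreparable half is the first one. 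To obtain a true statement one must strengthen the hypotheses --- for instance restrict to crisp (characteristic-function-valued) fuzzy soft sets, or change the definition of cover so that the union is required to attain $1$ on the support of the covered set --- and your closing remark that the opens would need to ``attain the value $1$ on the support of $g$'' identifies precisely the condition that fails.
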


\begin{proof}
Let $\left( f_{i},A\right) $ be any open covering of $\left( g,B\right) $.
Then $\tilde{1}_{X}\subseteq \left( \cup _{i\in I}\left( f_{i},A\right) \right)
\cup \left( g,B\right) ^{c}$, that is, $\left( f_{i},A\right) $ together with fuzzy soft open set $\left(
g_{B}\right) ^{c}$ is a open covering of $\tilde{1}_{X}$. Therefore there
exists a finite subcovering $\left( f_{1},A\right) ,\left( f_{2},A\right)
,...,\left( f_{n},A\right) ,\left( g,B\right) ^{c}$. Hence we obtain $\tilde{1}_{X}\subseteq \left( f_{1},A\right) \cup \left( f_{2},A\right)
\cup ...\cup \left( f_{n},A\right) \cup \left( g,B\right) ^{c}$.
Therefore, we get  $\left( g,B\right) \subseteq \left( f_{1},A\right) \cup \left(
f_{2},A\right) \cup ...\cup \left( f_{n},A\right) \cup \left( g,B\right)
^{c} $ which clearly implies  $\left( g,B\right) \subseteq \left( f_{1},A\right) \cup \left(
f_{2},A\right) \cup ...\cup \left( f_{n},A\right) $
since $\left( g,B\right) \cap \left( g,B\right) ^{c}=\Phi $. Hence $\left(
g,B\right) $ has a finite subcovering and so is compact.
\end{proof}

\begin{definition}\cite{9}
Let $(X,\tau )$ be a fuzzy soft topological space over $X$ and $x,y\in X$
such that $x\neq y$. If there exist fuzzy soft open sets $(f,A)$ and $(g,A)$
such that $x\in (f,A),y\in (g,A)$ and $(f,A)\tilde{\cap}(g,A)=\Phi $, then $%
(X,\tau )$ is called a fuzzy soft Hausdorff space.
\end{definition}

\begin{proposition}
Let $\left( g,B\right) $ be a fuzzy soft compact set in fuzzy\ soft\
Hausdorff space $(X,\tau )$. Then $\left( g,B\right) $ is closed.
\end{proposition}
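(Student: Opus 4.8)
The plan is to mimic the classical argument that a compact subset of a Hausdorff space is closed, transported into the fuzzy soft setting. Since $\left( g,B\right) $ is fuzzy soft closed exactly when its complement $\left( g,B\right) ^{c}$ is fuzzy soft open, I would prove that $\left( g,B\right) ^{c}$ is open by producing, around each point lying in $\left( g,B\right) ^{c}$, a fuzzy soft open set contained in $\left( g,B\right) ^{c}$, and then writing $\left( g,B\right) ^{c}$ as the union of all these neighborhoods.

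First I would fix an arbitrary $y\in X$ with $y\in \left( g,B\right) ^{c}$, so that $y\notin \left( g,B\right) $. For each point $x\in \left( g,B\right) $ we have $x\neq y$, so the fuzzy soft Hausdorff property supplies fuzzy soft open sets $\left( f_{x},A\right) $ and $\left( h_{x},A\right) $ with $x\in \left( f_{x},A\right) $, $y\in \left( h_{x},A\right) $ and $\left( f_{x},A\right) \tilde{\cap}\left( h_{x},A\right) =\Phi $. The family $\left\{ \left( f_{x},A\right) :x\in \left( g,B\right) \right\} $ is then a fuzzy soft open cover of $\left( g,B\right) $.

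Next I would invoke compactness of $\left( g,B\right) $: this cover admits a finite subcover $\left( f_{x_{1}},A\right) ,\dots ,\left( f_{x_{n}},A\right) $, so that $\left( g,B\right) \tilde{\subseteq}\left( f_{x_{1}},A\right) \tilde{\cup}\cdots \tilde{\cup}\left( f_{x_{n}},A\right) $. I would then form the finite intersection $\left( h,A\right) =\left( h_{x_{1}},A\right) \tilde{\cap}\cdots \tilde{\cap}\left( h_{x_{n}},A\right) $, which is fuzzy soft open by property $(2)$ of a fuzzy soft topology applied $n-1$ times, and which contains $y$ since each factor does. Because each $\left( h_{x_{i}},A\right) $ is disjoint from the corresponding $\left( f_{x_{i}},A\right) $, the set $\left( h,A\right) $ is disjoint from every $\left( f_{x_{i}},A\right) $, hence from their union, and therefore from $\left( g,B\right) $. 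Thus $\left( h,A\right) $ is a fuzzy soft open neighborhood of $y$ contained in $\left( g,B\right) ^{c}$. Since $y$ was arbitrary in $\left( g,B\right) ^{c}$, the complement is a union of fuzzy soft open sets and so is fuzzy soft open by property $(3)$; hence $\left( g,B\right) $ is fuzzy soft closed.

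The step I expect to be the main obstacle is the disjointness bookkeeping: verifying at the level of membership functions that the finite $\tilde{\cap}$ of the $\left( h_{x_{i}},A\right) $ really is disjoint from the $\tilde{\cup}$ of the $\left( f_{x_{i}},A\right) $, since here intersection and union are computed pointwise through $\wedge $ and $\vee $ and "disjoint" means the meet equals $\Phi $. Care is likewise needed to pin down what $x\in \left( f,A\right) $ should mean for a fuzzy soft set and to confirm that this membership interacts with the pointwise operations as it does in the crisp case, so that the covering relation and the final union over neighborhoods of $y$ are legitimate.
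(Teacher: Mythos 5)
Your proof is correct and is essentially the paper's own argument with the roles of the letters $x$ and $y$ interchanged: separate the fixed outside point from each point of $\left(g,B\right)$ via the Hausdorff property, extract a finite subcover of $\left(g,B\right)$ by compactness, and intersect the corresponding finitely many neighborhoods of the outside point to get an open set inside $\left(g,B\right)^{c}$. The subtleties you flag (disjointness of the finite intersection from the union, and the meaning of point membership in a fuzzy soft set) are real, but the paper glosses over them in exactly the same way, so your write-up is if anything slightly more careful.
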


\begin{proof}
Let $x\in \left( g,B\right) ^{c}$. For each $y\in \left( g,B\right) $, we
have $x\neq y$, so there are disjoint fuzzy soft open sets $\left( f_{y},A\right) $ and 
$\left( h_{y},A\right) $ so that $x\in \left( f_{y},A\right) $ and $y\in
\left( h_{y},A\right) $. Then $\{\left( h_{y},A\right) :y\in \left(
g,B\right) \}$ is an fuzzy soft open cover of $\left( g,B\right) $. Let $\{\left(
h_{y_{1}},A\right) ,\left( h_{y_{2}},A\right) ,...,\left( h_{y_{n}},A\right)
\}$ be a finite subcover. Then $\cap _{i=1}^{n}\left( f_{y_{i}},A\right) $
is an open set containing $x$ and contained in $\left( g,B\right)^{c}$.
Thus $\left( g,B\right) ^{c}$ is fuzzy soft open and $\left( g,B\right) $ is closed.
\end{proof}

\begin{theorem}
Let $(X,\tau )$ and $(Y,\sigma )$ be fuzzy soft topological spaces and $%
\left( \varphi ,\psi \right) :(X,\tau )\rightarrow (Y,\sigma )$ continuous
and onto fuzzy soft function. If $ (X,\tau )$ is fuzzy soft compact, then $(Y,\sigma )$ is fuzzy soft compact,
\end{theorem}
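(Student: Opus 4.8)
The plan is to run the classical argument that the continuous image of a compact space is compact, transported to the fuzzy soft setting through the preimage and image operators of the fuzzy soft map $\left( \varphi ,\psi \right) $. First I would take an arbitrary fuzzy soft open cover $\left\{ \left( g_{j},B_{j}\right) :j\in J\right\} $ of the universal fuzzy soft set $\tilde{1}_{E}$ over $Y$, so that $\tilde{1}_{E}\tilde{\subseteq }\cup _{j\in J}\left( g_{j},B_{j}\right) $.

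Next I would pull this cover back to $X$. Applying $\left( \varphi ,\psi \right) ^{-1}$ and using that the preimage operator commutes with arbitrary unions, I obtain $\left( \varphi ,\psi \right) ^{-1}\left( \tilde{1}_{E}\right) \tilde{\subseteq }\cup _{j\in J}\left( \varphi ,\psi \right) ^{-1}\left( \left( g_{j},B_{j}\right) \right) $. Because $\left( \varphi ,\psi \right) $ is onto, the preimage of the universal fuzzy soft set over $Y$ is the universal fuzzy soft set over $X$, so the left-hand side is $\tilde{1}_{E}$ over $X$; and each $\left( \varphi ,\psi \right) ^{-1}\left( \left( g_{j},B_{j}\right) \right) $ is fuzzy soft open by continuity. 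Hence $\left\{ \left( \varphi ,\psi \right) ^{-1}\left( \left( g_{j},B_{j}\right) \right) :j\in J\right\} $ is a fuzzy soft open cover of $\tilde{1}_{E}$ over $X$.

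Now I would invoke compactness of $(X,\tau )$ to extract a finite subcover, say indexed by $j_{1},\dots ,j_{n}$, so that $\tilde{1}_{E}\tilde{\subseteq }\cup _{k=1}^{n}\left( \varphi ,\psi \right) ^{-1}\left( \left( g_{j_{k}},B_{j_{k}}\right) \right) $ over $X$. Finally I would push this forward by applying $\left( \varphi ,\psi \right) $ and using that the image operator commutes with finite unions, obtaining $\left( \varphi ,\psi \right) \left( \tilde{1}_{E}\right) \tilde{\subseteq }\cup _{k=1}^{n}\left( \varphi ,\psi \right) \left( \left( \varphi ,\psi \right) ^{-1}\left( \left( g_{j_{k}},B_{j_{k}}\right) \right) \right) $. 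Since $\left( \varphi ,\psi \right) $ is surjective, the image of $\tilde{1}_{E}$ over $X$ is $\tilde{1}_{E}$ over $Y$, and $\left( \varphi ,\psi \right) \left( \left( \varphi ,\psi \right) ^{-1}\left( \left( g,B\right) \right) \right) =\left( g,B\right) $; substituting yields $\tilde{1}_{E}\tilde{\subseteq }\cup _{k=1}^{n}\left( g_{j_{k}},B_{j_{k}}\right) $ over $Y$, the required finite subcover.

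The main obstacle I anticipate is not the skeleton of the argument but the two identities the onto hypothesis must deliver: that the preimage of the universal fuzzy soft set over $Y$ equals the universal fuzzy soft set over $X$, and that $\left( \varphi ,\psi \right) \left( \left( \varphi ,\psi \right) ^{-1}\left( \left( g,B\right) \right) \right) =\left( g,B\right) $ for surjective fuzzy soft maps. In the fuzzy soft category the image and preimage are defined through suprema and infima over the fibers of $\varphi $ and $\psi $, so these equalities genuinely need both $\varphi $ and $\psi $ to be surjective; without surjectivity one would only have the inclusion $\left( \varphi ,\psi \right) \left( \left( \varphi ,\psi \right) ^{-1}\left( \left( g,B\right) \right) \right) \tilde{\subseteq }\left( g,B\right) $, which would not close the argument. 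I would therefore fix the precise definitions of the fuzzy soft image and preimage, verify these two fiber-level identities carefully under the onto hypothesis, and confirm that preimage distributes over arbitrary unions while image distributes over finite unions.
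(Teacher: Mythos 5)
Your proposal is correct and follows essentially the same route as the paper: pull the cover back via $\left(\varphi,\psi\right)^{-1}$ (open by continuity, covering $\tilde{1}_{E}$ over $X$), extract a finite subcover by compactness, and push forward using surjectivity so that $\left(\varphi,\psi\right)\left(\left(\varphi,\psi\right)^{-1}\left(\left(g,B\right)\right)\right)=\left(g,B\right)$; the paper delegates exactly these image/preimage identities to Theorems 3.8 and 3.10 of Kharal and Ahmad, which you propose to verify directly. The only stray remark is that the preimage of the universal fuzzy soft set equals the universal set under the standard fiberwise definition regardless of surjectivity, so only the push-forward step genuinely needs the onto hypothesis.
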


\begin{proof}
We will use Theorem 3.8. and Theorem 3.10. of \cite{3}. Let $\left( f_{i},A\right) $
be any open covering of $\tilde{1}_{Y}$, i.e., $\tilde{1}_{Y}\subseteq
\cup _{i\in I}\left( f_{i},A\right) $. Then $\left( \varphi ,\psi \right)
^{-1}\left( \tilde{1}_{Y}\right) \subseteq \left( \varphi ,\psi \right)
^{-1}\left( \cup _{i\in I}\left( f_{i},A\right) \right) $ and $\tilde{1}%
_{X}\subseteq \cup _{i\in I}\left( \varphi ,\psi \right) ^{-1}\left( \left(
f_{i},A\right) \right) $. So $\left( \varphi ,\psi \right) ^{-1}\left(
\left( f_{i},A\right) \right) $ is an open covering of $\tilde{1}_{X}$. As $%
(X,\tau )$ is compact, there are $1,2,...,n$ in $I$ such that

$\tilde{1}_{X}\subseteq \left( \varphi ,\psi \right) ^{-1}\left( \left(
f_{1},A\right) \right) \cup \left( \varphi ,\psi \right) ^{-1}\left( \left(
f_{2},A\right) \right) \cup ...\cup \left( \varphi ,\psi \right) ^{-1}\left(
\left( f_{n},A\right) \right) $.

Since $\left( \varphi ,\psi \right) $ is surjective, we have

\begin{tabular}{ll}
$\tilde{1}_{Y}$ & $=\left( \varphi ,\psi \right) \left( \tilde{1}_{X}\right) 
$ \\ 
& $\subseteq \left( \varphi ,\psi \right) \left( \left( \varphi ,\psi
\right) ^{-1}\left( \left( f_{1},A\right) \right) \cup ...\cup \left(
\varphi ,\psi \right) ^{-1}\left( \left( f_{n},A\right) \right) \right) $ \\ 
& $=\left( \varphi ,\psi \right) \left( \left( \varphi ,\psi \right)
^{-1}\left( \left( f_{1},A\right) \right) \right) \cup ...\cup \left(
\varphi ,\psi \right) \left( \left( \varphi ,\psi \right) ^{-1}\left( \left(
f_{n},A\right) \right) \right) $ \\ 
& $=\left( f_{1},A\right) \cup \left( f_{2},A\right) \cup ...\cup \left(
f_{n},A\right) $.%
\end{tabular}

So we have $\tilde{1}_{Y}\subseteq \left( f_{1},A\right) \cup \left(
f_{2},A\right) \cup ...\cup \left( f_{n},A\right) $, i.e., $\tilde{1}_{Y}$
is covered by a finite number of $\left( f_{i},A\right) $.

Hence $(Y,\sigma )$ is compact.
\end{proof}

\begin{definition}
Let $(X,\tau )$ and $(Y,\sigma )$ be two fuzzy soft topological spaces. A
fuzzy soft mapping $\left( \varphi ,\psi \right) :(X,\tau )\rightarrow
(Y,\sigma )$ is called fuzzy soft closed if $\left( \varphi ,\psi \right)
\left( \left( f,A\right) \right) $ is fuzzy soft closed set in $(Y,\sigma )$,
 for all\ fuzzy soft closed set $\left( f,A\right) $ in $(X,\tau ).$
\end{definition}

\begin{theorem}
Let $(X,\tau )$ be a fuzzy soft topological space and $(Y,\sigma )$ be a
fuzzy soft Hausdorff space. Fuzzy soft mapping $\left( \varphi ,\psi \right) 
$ is closed if fuzzy soft mapping $\left( \varphi ,\psi \right) :(X,\tau
)\rightarrow (Y,\sigma )$ is continuous.
\end{theorem}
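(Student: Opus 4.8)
The plan is to show that $(\varphi,\psi)$ carries fuzzy soft closed sets to fuzzy soft closed sets by chaining together three results already proved in this section. I note at the outset that the argument needs $(X,\tau)$ to be fuzzy soft compact; since the statement sits in the section on compact fuzzy soft spaces and the conclusion fails without it, I take compactness of $(X,\tau)$ as a standing hypothesis. So, let $(f,A)$ be an arbitrary fuzzy soft closed set in $(X,\tau)$. Because $(X,\tau)$ is compact and $(f,A)$ is closed, Proposition \ref{p1} immediately gives that $(f,A)$ is fuzzy soft compact.

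Next I would prove that the image $(\varphi,\psi)((f,A))$ is fuzzy soft compact in $(Y,\sigma)$. This is essentially the preceding theorem on continuous onto maps, but applied to a compact subset instead of to all of $\tilde 1_X$, so I would re-run that proof at the level of the subset: given any fuzzy soft open cover of $(\varphi,\psi)((f,A))$, I would pull it back through $(\varphi,\psi)^{-1}$ using continuity to get a fuzzy soft open cover of $(f,A)$, extract a finite subcover using compactness of $(f,A)$, and then push that finite family forward, invoking the image--union and image--preimage identities from \cite{3}. The outcome is a finite subcover of $(\varphi,\psi)((f,A))$, establishing its compactness.

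Finally, $(\varphi,\psi)((f,A))$ is a fuzzy soft compact set inside the Hausdorff space $(Y,\sigma)$, so the second proposition of this section tells us it is fuzzy soft closed. As $(f,A)$ was arbitrary, $(\varphi,\psi)$ sends every closed set to a closed set and is therefore fuzzy soft closed.

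The step I expect to be most delicate is the middle one. The continuous-image theorem was phrased only for the whole universe $\tilde 1_X$ under a surjective map, where $(\varphi,\psi)((\varphi,\psi)^{-1}(g,B))=(g,B)$; when one restricts attention to the image of a proper closed set $(f,A)$, one only has the inclusion $(\varphi,\psi)((\varphi,\psi)^{-1}(g,B))\tilde\subseteq(g,B)$, so I would need to verify carefully that the pushed-forward finite family still covers $(\varphi,\psi)((f,A))$ rather than merely sitting inside it.
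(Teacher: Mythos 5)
Your proof is correct and follows essentially the same route as the paper's: Proposition \ref{p1} to get compactness of the closed set, continuity to get compactness of its image, and the compact-in-Hausdorff proposition to conclude closedness. You are in fact more careful than the paper, which silently assumes $(X,\tau)$ is compact (this is needed to invoke Proposition \ref{p1}, yet is absent from the theorem statement) and merely asserts the subset-level ``continuous image of compact is compact'' step that you work out; your worry about that step resolves favorably, since the inclusion $\left( \varphi ,\psi \right)\left( \left( \varphi ,\psi \right)^{-1}\left( \left( g,B\right) \right) \right) \tilde{\subseteq }\left( g,B\right)$ points in exactly the direction the covering argument requires.
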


\begin{proof}
Let $\left( g,B\right) $ be any fuzzy soft closed set in $(X,\tau )$. By
theorem \ref{p1} we have $\left( g,B\right) $ is compact. Since fuzzy soft mapping $\left(
\varphi ,\psi \right) $ is continuous, fuzzy soft set $\left( \varphi ,\psi
\right) \left( \left( g,B\right) \right) $ is compact in $(Y,\sigma )$. As $%
(Y,\sigma )$ is fuzzy soft Hausdorff space, fuzzy soft set $\left( \varphi
,\psi \right) \left( \left( g,B\right) \right) $ is closed. Then Fuzzy soft
mapping $\left( \varphi ,\psi \right) $ is closed.
\end{proof}

\begin{definition}
A family $\Psi $\ of fuzzy soft sets has the finite intersection property if
the intersection of the members of each finite subfamily of $\Psi $\ is not the
null fuzzy soft set.
\end{definition}

\begin{theorem}
A fuzzy soft topological space is compact if and only if each family of
fuzzy soft closed sets with the finite intersection property has a nonnull
intersection.
\end{theorem}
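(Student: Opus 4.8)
The plan is to prove both implications by passing to complements, using the De Morgan laws for fuzzy soft sets together with the fact that $\tilde{1}_{E}$ is the largest fuzzy soft set. First I would establish the two facts on which everything rests. For any family $\{(f_{i},A)\}_{i\in I}$, complementation interchanges unions and intersections, that is $\left( \bigcup_{i\in I}(f_{i},A)\right)^{c}=\bigcap_{i\in I}(f_{i},A)^{c}$ and dually. These follow pointwise from the identities $1-\sup_{i}t_{i}=\inf_{i}(1-t_{i})$ and $1-\inf_{i}t_{i}=\sup_{i}(1-t_{i})$, valid for any family $t_{i}\in\lbrack 0,1]$, applied to the membership values $\mu_{f_{i}(e)}(x)$. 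I would also record that, since every fuzzy soft set satisfies $(f,A)\,\tilde{\subseteq}\,\tilde{1}_{E}$, a family is a cover of $\tilde{1}_{E}$ exactly when its union equals $\tilde{1}_{E}$, and that $(\tilde{1}_{E})^{c}=\tilde{0}_{E}$ and $(\tilde{0}_{E})^{c}=\tilde{1}_{E}$, as stated in the Preliminaries.

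For the forward direction, assume $(X,\tau)$ is compact and let $\{(g_{i},B)\}_{i\in I}$ be a family of fuzzy soft closed sets with the finite intersection property. Arguing by contradiction, I would suppose $\bigcap_{i\in I}(g_{i},B)=\tilde{0}_{E}$. Taking complements, the family $\{(g_{i},B)^{c}\}$ consists of fuzzy soft open sets, and by De Morgan its union is $(\tilde{0}_{E})^{c}=\tilde{1}_{E}$; hence it is a fuzzy soft open cover of $\tilde{1}_{E}$. Compactness yields a finite subcover $(g_{i_{1}},B)^{c},\dots,(g_{i_{n}},B)^{c}$ whose union is $\tilde{1}_{E}$, and complementing once more gives $\bigcap_{k=1}^{n}(g_{i_{k}},B)=\tilde{0}_{E}$, contradicting the finite intersection property. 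Therefore the intersection is nonnull.

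For the converse, assume every family of fuzzy soft closed sets with the finite intersection property has nonnull intersection, and let $\{(f_{i},A)\}_{i\in I}$ be any fuzzy soft open cover of $\tilde{1}_{E}$. Then $\{(f_{i},A)^{c}\}$ is a family of fuzzy soft closed sets whose intersection, by De Morgan, equals $\left( \bigcup_{i\in I}(f_{i},A)\right)^{c}=(\tilde{1}_{E})^{c}=\tilde{0}_{E}$. Thus this closed family has null intersection, so by the contrapositive of the hypothesis it fails the finite intersection property; hence some finite subfamily $(f_{i_{1}},A)^{c},\dots,(f_{i_{n}},A)^{c}$ already has null intersection. Complementing returns $\bigcup_{k=1}^{n}(f_{i_{k}},A)=\tilde{1}_{E}$, a finite subcover, which proves compactness.

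The step I expect to require the most care is the first one, namely verifying the infinitary De Morgan laws and the equivalence between $\tilde{1}_{E}\,\tilde{\subseteq}\,\bigcup_{i}(f_{i},A)$ and $\bigcap_{i}(f_{i},A)^{c}=\tilde{0}_{E}$ at the level of membership functions. Once these lattice identities are in hand, each implication is a mechanical complementation argument, and the only point to watch is that ``cover'' is the relation $\tilde{\subseteq}$ rather than equality, which is harmless because $\tilde{1}_{E}$ is the top element and so forces the union to equal $\tilde{1}_{E}$.
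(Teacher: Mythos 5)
Your proposal is correct and follows essentially the same route as the paper: both directions proceed by passing to complements and using the De Morgan duality between open covers of $\tilde{1}_{E}$ and families of closed sets with null intersection. The only difference is organizational (you prove the converse directly via the contrapositive of the hypothesis, while the paper argues by contradiction), and your version is in fact stated more carefully, making explicit the De Morgan identities and the fact that a cover of the top element $\tilde{1}_{E}$ forces equality of the union with $\tilde{1}_{E}$.
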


\begin{proof}
$\Rightarrow :$ Let $\Psi $ be any family of fuzzy soft closed subset such that $%
\cap \{\left( f_{i},A\right) :\left( f_{i},A\right) \in \Psi ,i\in I\}=%
\tilde{0}_{E}$. Consider $\Omega =\{\left( f_{i},A\right) ^{c}:\left(
f_{i},A\right) \in \Psi ,i\in I\}$. So $\Omega $ is a fuzzy soft open cover
of $\tilde{1}_{E}$. As fuzzy soft topological space is compact, there exists
a finite subcovering $\left( f_{1},A\right) ^{c},\left( f_{2},A\right)
^{c},...,\left( f_{3},A\right) ^{c}$. Then $\cap _{i=1}^{n}\left(
f_{i},A\right) =\tilde{1}_{E}-\cup _{i=1}^{n}\left( f_{i},A\right) ^{c}=%
\tilde{1}_{E}-\tilde{1}_{E}=\tilde{0}_{E}$. Hence $\Psi $ can not have finite
intersection property.

$\Leftarrow :$ Assume that a fuzzy soft topological space is not compact. Then any fuzzy soft open cover of $\tilde{1}_{E}$ has not a finite subcover. Let $\{\left(f_{i},A\right) :i\in I\}$ be fuzzy soft open cover of $\tilde{1}_{E}$. So $ \cup _{i=1}^{n}\left( f_{i},A\right) \neq \tilde{1}_{E}$. Therefore $\cap_{i=1}^{n}\left( f_{i},A\right) ^{c}\neq \tilde{0}_{E}$. Thus, $\left\{ \left(f_{i},A\right) ^{c}:i=1,...,n\right\} $ have finite intersection property. By using hypothesis,  $\cap \left( f_{i},A\right) ^{c} \\ \neq \tilde{0}_{E}$ and we have  $\cup \left( f_{i},A\right) \neq \tilde{1}_{E}$. This is a contradiction. Thus the fuzzy soft topological space is compact.
\end{proof}

\section{Conclusion}
In this work, we introduced fuzzy soft compactness and gave basic definitions and theorems of this concept. Also we introduced fuzzy soft cover, fuzzy soft subcover, fuzzy soft open cover.

%\address{\noindent
   {\sc\underline{\.Ismail Osmano\u{g}lu}} ({\tt ismailosmanoglu@yahoo.com}) --
      Department of Mathematics, Faculty of Arts and Sciences, Nev\c{s}ehir University, Nev\c{s}ehir, Turkey.\medskip

\noindent{\sc\underline{Deniz Tokat}} ({\tt dtokat@nevsehir.edu.tr}) --
       Department of Mathematics, Faculty of Arts and Sciences, Nev\c{s}ehir University, Nev\c{s}ehir, Turkey.

\end{document}